\documentclass[a4paper,12pt]{amsproc}
\usepackage{amsmath,amssymb,cite,mathptmx}

\makeatletter
\newcommand{\subjclassname@later}{\textup{2010} Mathematics Subject Classification}
\makeatother

\parskip3mm
\oddsidemargin -10 pt \evensidemargin 10 pt \marginparwidth 1 in
\oddsidemargin 0 in \evensidemargin 0 in \marginparwidth 0.75 in
\textwidth 6.375 true in

\newtheorem{theorem}{Theorem}[section]
\newtheorem{lemma}{Lemma}[section]

\newtheorem{remark}{Remark}[section]

\numberwithin{equation}{section} \numberwithin{theorem}{section}

\DeclareMathOperator{\RE}{Re}

\begin{document}
\title{Close-to-convexity and Starlikeness of Analytic Functions}

\author[S. K. Lee]{See Keong Lee}
\address{School of Mathematical Sciences, Universiti Sains Malaysia,
11800 USM,  Penang,  Malaysia} \email{sklee@cs.usm.my }

\author[V.  Ravichandran]{ V.  Ravichandran}
\address{Department of Mathematics, University of Delhi, Delhi 110
007, India\\School of Mathematical Sciences, Universiti Sains Malaysia, 11800 USM, Penang, Malaysia} \email{vravi@maths.du.ac.in}

\author[S. Shamani]{Shamani Supramaniam}
\address{School of Mathematical Sciences,
Universiti Sains Malaysia,  11800 USM,  Penang,  Malaysia}
\email{sham105@hotmail.com}

\subjclass[2000]{30C45, 30C80}
\keywords{Analytic functions, multivalent functions, starlike functions, convex functions, close-to-convex
functions,  subordination.}

\begin{abstract}For functions  $f(z)=z^p+a_{n+1}z^{p+1}+\cdots$ defined on the open unit disk, the condition $\Re (f'(z)/z^{p-1})>0$ is sufficient for close-to-convexity of $f$. By making use of this result, several sufficient conditions for close-to-convexity are investigated and relevant connections with previously known results are indicated.
\end{abstract}


\maketitle

\section{Introduction}
Let $\mathbb{D} : = \{ z \in \mathbb{C} : |z| < 1 \}$ be the open
unit disk and  $\mathcal{A}_{p,n}$ be the class of all analytic
functions $f:\mathbb{D}\rightarrow \mathbb{C}$ of the form $f(z)=z^{p}
+a_{n+p}z^{n+p}+a_{n+p+1}z^{n+p+1}+\dotsc$ with
$\mathcal{A}:=\mathcal{A}_{1,1}$. For studies related to multivalent functions,   see \cite{living,nuno1,nuno2,nuno3,nuno4}.
Singh and Singh \cite{singh} obtained  several interesting conditions for functions $f\in \mathcal{A}$ satisfying  inequalities involving $f'(z)$ and $zf''(z)$ to be univalent  or starlike in $\mathbb{D}$. Owa \emph{et al.} \cite{owa} generalized the results
of Singh and Singh \cite{singh} and also obtained several sufficient conditions for close-to-convexity, starlikeness and convexity of
functions $f\in\mathcal{A}$. In fact, they have proved the following theorems.

\begin{theorem}{\rm \cite[Theorems 1-3]{owa}}  \label{th1.1} Let $ 0\leq\alpha<1$ and $\beta,\gamma \geq 0$.
If  $f\in \mathcal{A}$, then
\[\RE
\left(1+\frac{zf''(z)}{f'(z)}\right)>\frac{1+3\alpha}{2(1+\alpha)} \Longrightarrow \RE
\left(f'(z)\right)>\frac{1+\alpha}{2},\]
 \[\RE
\left(1+\frac{zf''(z)}{f'(z)}\right)< \frac{3+2\alpha}{(2+\alpha)}\Longrightarrow
\left|f'(z)-1\right|<1+\alpha,\]
\[\left|f'(z)-1\right|^\beta |zf''(z)|^\gamma
<\frac{(1-\alpha)^{\beta+\gamma}}{2^{\beta+2\gamma}}
 \Longrightarrow \RE
\left(f'(z)\right)>\frac{1+\alpha}{2}.\]
\end{theorem}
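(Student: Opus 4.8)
The plan is to prove the three implications one by one, each time reducing the statement to an elementary inequality by means of one of two classical devices. For the first and third implications, which both conclude $\RE f'(z)>(1+\alpha)/2$, I would write $f'(z)=\frac{1+\alpha}{2}+\frac{1-\alpha}{2}\,p(z)$; since $f\in\mathcal A$, $p$ is analytic on $\mathbb D$ with $p(0)=1$, and the conclusion becomes $\RE p(z)>0$ on $\mathbb D$. The tool I would use is the well-known boundary estimate for such $p$: if $p$ is analytic, $p(0)=1$, $p\not\equiv1$, and some $z_0\in\mathbb D$ satisfies $\RE p(z)>0$ for $|z|<|z_0|$ while $\RE p(z_0)=0$, then $p(z_0)=i\rho$ for some $\rho\in\mathbb R\setminus\{0\}$ and $z_0p'(z_0)$ is a negative real number with $|z_0p'(z_0)|\ge\frac12(1+\rho^2)$ --- this follows, for instance, by applying the minimum principle for harmonic functions to $\RE p$. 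For the second implication I would instead set $g(z)=f'(z)-1$, so that $g(0)=0$ and the conclusion becomes $|g(z)|<1+\alpha$; there the device is Jack's lemma: if $|g|$ attains its maximum over $\{|z|\le|z_0|\}$ at $z_0$, then $z_0g'(z_0)=m\,g(z_0)$ for some real $m\ge1$.

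For the first implication, assume to the contrary that $\RE p$ fails to be positive on $\mathbb D$ and choose $z_0$ as above. Substituting $f'(z_0)=\frac{1+\alpha}{2}+\frac{1-\alpha}{2}i\rho$ (whose real part is $\frac{1+\alpha}{2}>0$, so that $f'(z_0)\neq0$) and $z_0f''(z_0)=\frac{1-\alpha}{2}z_0p'(z_0)$ into the expression and taking real parts --- recalling that $z_0p'(z_0)$ is real and negative --- I expect to obtain
\[ \RE\!\left(1+\frac{z_0f''(z_0)}{f'(z_0)}\right)=1-\frac{(1-\alpha^2)\,|z_0p'(z_0)|}{(1+\alpha)^2+(1-\alpha)^2\rho^2}. \]
The claim then reduces to showing that this is $\le\frac{1+3\alpha}{2(1+\alpha)}$, which after clearing denominators (using $1-\alpha>0$) is exactly $2(1+\alpha)^2|z_0p'(z_0)|\ge(1+\alpha)^2+(1-\alpha)^2\rho^2$; this follows from $|z_0p'(z_0)|\ge\frac12(1+\rho^2)$ together with $(1+\alpha)^2\ge(1-\alpha)^2$, and contradicts the hypothesis. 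The third implication uses the same $z_0$: from $f'(z_0)-1=\frac{1-\alpha}{2}(-1+i\rho)$ and $z_0f''(z_0)=\frac{1-\alpha}{2}z_0p'(z_0)$ one gets $|f'(z_0)-1|=\frac{1-\alpha}{2}\sqrt{1+\rho^2}$ and $|z_0f''(z_0)|\ge\frac{1-\alpha}{4}(1+\rho^2)$, whence
\[ |f'(z_0)-1|^{\beta}\,|z_0f''(z_0)|^{\gamma}\ge\frac{(1-\alpha)^{\beta+\gamma}}{2^{\beta+2\gamma}}\,(1+\rho^2)^{\beta/2+\gamma}\ge\frac{(1-\alpha)^{\beta+\gamma}}{2^{\beta+2\gamma}}, \]
since $1+\rho^2\ge1$ and $\beta/2+\gamma\ge0$; again this contradicts the hypothesis.

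For the second implication I would first record that the hypothesis forces $f'\neq0$ on $\mathbb D$: near a zero of $f'$ the quantity $zf''(z)/f'(z)=z\,(\log f'(z))'$ has a simple pole with a nonzero residue, so $\RE(1+zf''(z)/f'(z))$ is unbounded above there, which is incompatible with the bound $\RE(1+zf''/f')<\frac{3+2\alpha}{2+\alpha}$. Now suppose $|g|<1+\alpha$ fails; by continuity and $g(0)=0$ there is a first point $z_0$ with $|g(z_0)|=1+\alpha$, where Jack's lemma gives $z_0f''(z_0)=z_0g'(z_0)=m\,g(z_0)$ with $m\ge1$. Writing $g(z_0)=f'(z_0)-1=(1+\alpha)e^{i\theta}$ and setting $\beta=1+\alpha\ge1$, a short computation should give
\[ \RE\!\left(1+\frac{z_0f''(z_0)}{f'(z_0)}\right)=1+\frac{m\,\beta\,(\beta+\cos\theta)}{\beta^2+2\beta\cos\theta+1}. \]
Since $\beta\ge1$ we have $\beta+\cos\theta\ge0$ and $\beta^2+2\beta\cos\theta+1=|1+\beta e^{i\theta}|^2>0$, so $m\ge1$ makes the right-hand side at least $1+\frac{\beta(\beta+\cos\theta)}{\beta^2+2\beta\cos\theta+1}$, and clearing denominators shows that this last quantity is $\ge\frac{1+2\beta}{1+\beta}=\frac{3+2\alpha}{2+\alpha}$ because the residual inequality is simply $(\beta-1)(1-\cos\theta)\ge0$. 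This contradicts the hypothesis.

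None of the three arguments is deep; the real work is in the algebraic simplifications, and the main obstacle is organizing each reduction so that it collapses to a single transparent inequality --- in $\rho$ for the first and third implications, in $\cos\theta$ for the second --- while keeping careful track of signs, in particular of $z_0p'(z_0)<0$ and of the nonnegativity of $(1+\alpha)^2-(1-\alpha)^2$ in the first implication. The one additional point that requires attention is the preliminary remark in the second implication that the hypothesis already excludes zeros of $f'$, so that the displayed identity is legitimate; this is an issue only in the borderline case $\alpha=0$, $e^{i\theta}=-1$.
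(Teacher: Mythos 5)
Your proof is correct and follows essentially the same route as the paper, which obtains this theorem as the case $p=n=1$ of its Theorems 2.1--2.3: assume the conclusion fails, pick the first boundary point $z_0$, apply the Jack--Miller--Mocanu derivative estimate there, and contradict the hypothesis. Your parametrization of the two half-plane conclusions by a Carath\'eodory function $p$ with $z_0p'(z_0)\le-\tfrac12(1+\rho^2)$ is just the M\"obius-transformed version of the paper's Schwarz-function substitution $f'(z)=(1+\alpha w(z))/(1+w(z))$ (note only that this derivative bound is the content of the Jack lemma itself, not merely of the minimum principle for $\RE p$), so the two arguments coincide up to a change of variable.
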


\begin{theorem} {\rm \cite[Theorem 4]{owa}}  \label{th1.2} Let $1<\lambda<3$.
If  $f\in\mathcal{A}$, then
\[\RE
\left(1+\frac{zf''(z)}{f'(z)}\right)<\left\{
                                       \begin{array}{ll}
                                        \frac{5\lambda-1}{2(\lambda+1)}, & \hbox{$1<\lambda \leq2$;}
\\[10pt]
                                         \frac{\lambda+1}{2(\lambda-1)}, & \hbox{$2<\lambda <3$,}
                                       \end{array}
                                     \right.
 \Longrightarrow\frac{zf'(z)}{f(z)}\prec
 \frac{\lambda(1-z)}{\lambda-z}.\]
\end{theorem}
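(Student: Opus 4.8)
The plan is to restate the hypothesis as a second-order differential subordination for $p(z):=zf'(z)/f(z)$ and then invoke the Miller--Mocanu boundary-point lemma (Jack's lemma). First I would record, by logarithmic differentiation of $p$, the identity
\[
1+\frac{zf''(z)}{f'(z)}=p(z)+\frac{zp'(z)}{p(z)},
\]
so the hypothesis says $\RE\bigl(p(z)+zp'(z)/p(z)\bigr)<M(\lambda)$ on $\mathbb{D}$, where $M(\lambda)$ denotes the piecewise bound in the statement; note that for $1+zf''/f'$ to be analytic (as the hypothesis presupposes) $f'$ must be zero-free, so $p$ is non-vanishing and $p(0)=1$. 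Set $q(z):=\lambda(1-z)/(\lambda-z)$: since $\lambda>1$ this is a M\"obius map with pole outside $\overline{\mathbb{D}}$, hence univalent and analytic on $\overline{\mathbb{D}}$ with $q(0)=1=p(0)$, and its image is the disk with centre $\lambda/(\lambda+1)$ and radius $\lambda/(\lambda+1)$ (so, incidentally, the conclusion forces $\RE zf'/f>0$). The zero-freeness of $f$ on $\mathbb{D}\setminus\{0\}$ needed to make $p$ analytic is handled in the standard way, proving the subordination on $\{|z|<r\}$ and letting $r\to1$.

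Next I would argue by contradiction: if $p\not\prec q$, the Miller--Mocanu lemma produces $z_0\in\mathbb{D}$, $\zeta_0=e^{i\theta}\in\partial\mathbb{D}$ and $m\ge1$ with $p(z_0)=q(\zeta_0)$ and $z_0p'(z_0)=m\zeta_0q'(\zeta_0)$; since $p$ is non-vanishing and $q(1)=0$ we have $\theta\ne0$. Substituting into the identity gives
\[
1+\frac{z_0f''(z_0)}{f'(z_0)}=q(\zeta_0)+\frac{m\,\zeta_0q'(\zeta_0)}{q(\zeta_0)},
\]
and it suffices to show the right-hand side has real part at least $M(\lambda)$, which contradicts the strict inequality in the hypothesis. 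A computation gives $q'(z)=-\lambda(\lambda-1)/(\lambda-z)^2$, whence $\zeta q'(\zeta)/q(\zeta)=-(\lambda-1)\zeta/\bigl[(\lambda-\zeta)(1-\zeta)\bigr]$.

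The core is the boundary estimate. Writing $D:=|\lambda-\zeta|^{2}=\lambda^{2}-2\lambda\cos\theta+1>0$ and using $\sin\theta\cot(\theta/2)=1+\cos\theta$, one finds
\[
\RE q(\zeta)=\frac{\lambda(\lambda+1)(1-\cos\theta)}{D}\ge0,\qquad \RE\frac{\zeta q'(\zeta)}{q(\zeta)}=\frac{\lambda^{2}-1}{2D}>0.
\]
The strict positivity of the second quantity makes the multiplier $m$ harmless: for every $m\ge1$,
\begin{align*}
\RE\left(q(\zeta)+\frac{m\,\zeta q'(\zeta)}{q(\zeta)}\right)
&\ge \RE q(\zeta)+\RE\frac{\zeta q'(\zeta)}{q(\zeta)}\\
&=\frac{(\lambda+1)(3\lambda-1-2\lambda\cos\theta)}{2D}=:g(\cos\theta).
\end{align*}
It then remains to minimise $g(t)=(\lambda+1)(3\lambda-1-2\lambda t)/\bigl(2(\lambda^{2}+1-2\lambda t)\bigr)$ over $t\in[-1,1]$. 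The algebraic point is that the numerator of $g'(t)$ factors as a positive quantity times $-\lambda(\lambda-1)(\lambda-2)$, so $g'$ has the sign of $2-\lambda$: for $1<\lambda\le2$, $g$ is nondecreasing and $\min g=g(-1)=(5\lambda-1)/(2(\lambda+1))$; for $2\le\lambda<3$, $g$ is nonincreasing and $\min g=g(1)=(\lambda+1)/(2(\lambda-1))$. In either case $\min g=M(\lambda)$, giving the desired contradiction, so $p\prec q$, i.e.\ $zf'(z)/f(z)\prec\lambda(1-z)/(\lambda-z)$.

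I expect the main obstacle to be this boundary analysis: obtaining the clean forms for $\RE q(\zeta)$ and $\RE(\zeta q'/q)$ on $\partial\mathbb{D}$ (the half-angle identity is what collapses the latter to $(\lambda^{2}-1)/(2D)$), recognising that its positivity reduces everything to the case $m=1$, and then carrying out the sign analysis of $g'$, whose factor $-(\lambda-1)(\lambda-2)$ is exactly what produces the two regimes $\lambda\le2$ and $\lambda\ge2$ appearing in the statement. The remaining ingredients are routine differential-subordination bookkeeping.
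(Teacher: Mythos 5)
Your proof is correct and follows essentially the same route as the paper's proof of its Theorem~\ref{th2.4} (of which Theorem~\ref{th1.2} is the case $p=n=1$): a boundary-point contradiction via the Miller--Mocanu/Jack lemma, with the identical estimate $\RE\bigl(q(\zeta)+m\zeta q'(\zeta)/q(\zeta)\bigr)\ge \lambda(\lambda+1)(1-\cos\theta)/D+m(\lambda^2-1)/(2D)$ and the same two-regime minimisation over $\cos\theta$. The only (cosmetic) difference is that you apply the subordination form of the lemma directly to $p=zf'/f$, whereas the paper writes $zf'/f=q\circ w$ and applies the lemma to the Schwarz-type function $w$.
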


In this present paper, the above results are extended for functions $f\in\mathcal{A}_{p,n}$ and in particular for
functions in $\mathcal{A}_{1,n}$.

\section{Close-to-convexity and Starlikeness }
For $f\in\mathcal{A}$, the condition $\RE f'(z)>0$ implies close-to-convexity and univalence of $f$. Similarly, for $f\in \mathcal{A}_{p,1}$, the inequality $\RE (f'(z)/z^{p-1})>0$ implies $p$-valency of $f$. See \cite{umetoh, umepam}. From this result, the functions satisfying the hypothesis of Theorems \ref{th2.1}--\ref{th2.3} are $p$-valent in $\mathbb{D}$. A function $f\in \mathcal{A}_{p,1}$ is close-to-convex if there is a $p$-valent convex function $\phi$ such that $\RE (f'(z)/\phi(z))>0$. Also they are all close-to-convex with respect to $\phi(z)=z^p$.

\begin{theorem} \label{th2.1} If the function $f\in\mathcal{A}_{p,n}$ satisfies the
inequality \begin{equation}\label{hpy1}  \RE
\left(1+\frac{zf''(z)}{f'(z)}\right)>\frac{(2p-n)+\alpha(2p+n)}{2 (\alpha+1)},\end{equation}
then \[\RE
\left(\frac{f'(z)}{pz^{p-1}}\right)>\frac{1+\alpha}{2}.\]\end{theorem}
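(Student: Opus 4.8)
The plan is to pass to the logarithmic derivative of the quotient $g(z):=f'(z)/(pz^{p-1})$ and then argue by contradiction with Jack's lemma at a first boundary point. Since $f\in\mathcal{A}_{p,n}$, we may write $f'(z)=pz^{p-1}g(z)$ with $g$ analytic on $\mathbb{D}$, $g(0)=1$, and $g(z)=1+c_{n}z^{n}+\cdots$. Differentiating once gives the identity
\[
1+\frac{zf''(z)}{f'(z)}=p+\frac{zg'(z)}{g(z)},
\]
so the hypothesis \eqref{hpy1} is precisely
\[
\RE\left(\frac{zg'(z)}{g(z)}\right)>\frac{(2p-n)+\alpha(2p+n)}{2(\alpha+1)}-p=-\frac{n(1-\alpha)}{2(1+\alpha)},
\]
while the conclusion $\RE\!\big(f'(z)/(pz^{p-1})\big)>(1+\alpha)/2$ is just $\RE g(z)>(1+\alpha)/2$. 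Thus everything reduces to showing that the displayed lower bound on $zg'/g$ forces $\RE g>(1+\alpha)/2$ on $\mathbb{D}$.

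To exploit this I would set $g(z)=\tfrac{1+\alpha}{2}+\tfrac{1-\alpha}{2}p(z)$, so that $p$ is analytic, $p(0)=1$, $p(z)=1+\cdots$ to order $n$, and the target becomes $\RE p(z)>0$. The case $p\equiv1$ is trivial (then $f(z)=z^{p}$), so assume $p\not\equiv1$ and suppose $\RE p$ is not positive throughout $\mathbb{D}$. Then there is a point $z_{0}\in\mathbb{D}$ with $\RE p(z)>0$ for $|z|<|z_{0}|$ and $p(z_{0})=ib$ for some real $b$. Applying Jack's lemma to $w:=(1-p)/(1+p)$ --- which is analytic on $|z|<|z_{0}|$ (there $\RE g\ge(1+\alpha)/2>0$, so no quotient is singular), vanishes to order at least $n$ at the origin, satisfies $|w|<1$ for $|z|<|z_{0}|$ and $|w(z_{0})|=1$ --- one gets $z_{0}w'(z_{0})=mw(z_{0})$ with $m\ge n$. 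Substituting $p=(1-w)/(1+w)$ and $w(z_{0})=(1-ib)/(1+ib)$ and simplifying shows that $z_{0}p'(z_{0})$ is a negative real number with
\[
z_{0}p'(z_{0})=-\frac{m}{2}\,(1+b^{2})\le-\frac{n}{2}\,(1+b^{2}).
\]

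Finally I would compute the real part of $z_{0}g'(z_{0})/g(z_{0})$. Since $g'=\tfrac{1-\alpha}{2}p'$, $g(z_{0})=\tfrac{1+\alpha}{2}+\tfrac{1-\alpha}{2}ib$, and $z_{0}p'(z_{0})$ is real,
\[
\RE\left(\frac{z_{0}g'(z_{0})}{g(z_{0})}\right)=\frac{(1-\alpha^{2})\,z_{0}p'(z_{0})}{(1+\alpha)^{2}+(1-\alpha)^{2}b^{2}}\le-\frac{n(1-\alpha^{2})(1+b^{2})}{2\big[(1+\alpha)^{2}+(1-\alpha)^{2}b^{2}\big]}.
\]
It then remains to check that the right-hand side is at most $-\tfrac{n(1-\alpha)}{2(1+\alpha)}$; clearing denominators, dividing by $1-\alpha>0$, and using $1-\alpha^{2}=(1-\alpha)(1+\alpha)$, this reduces to $(1+\alpha)^{2}b^{2}\ge(1-\alpha)^{2}b^{2}$, which holds because $\alpha\ge0$. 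This contradicts the hypothesis on $zg'/g$, so $\RE p>0$ on $\mathbb{D}$ and the theorem follows. The argument involves no genuine obstacle; the only points requiring care are the clean simplification of $z_{0}p'(z_{0})$ coming out of Jack's lemma and the verification that every quotient used is legitimate up to the first boundary point, which is guaranteed by $\RE g\ge(1+\alpha)/2>0$ there.
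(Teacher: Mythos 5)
Your proposal is correct and is essentially the paper's own argument in a different parametrization: your auxiliary function $w=(1-p)/(1+p)$ coincides exactly with the paper's $w$ defined by $f'(z)/(pz^{p-1})=(1+\alpha w)/(1+w)$, and both proofs contradict the hypothesis at a first boundary point via Lemma~\ref{lem1.1}. The only difference is bookkeeping --- the paper maximizes over $w(z_0)=e^{i\theta}$ directly, while you use the equivalent form $p(z_0)=ib$, $z_0p'(z_0)\le -\tfrac{n}{2}(1+b^2)$ --- and both arguments use the standing assumption $0\le\alpha<1$.
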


For the proof of our main results, we need the following lemma.
\begin{lemma}\label{lem1.1} {\rm \cite[Lemma 2.2a]{miller}}
Let $z_0\in \mathbb{D}$ and $r_0=|z_0|$. Let $f(z)=a_n z^n
+a_{n+1}z^{n+1}+\cdots$ be continuous on
$\mathbb{\overline{D}}_{r_0}$ and analytic on
$\mathbb{D}_{r_0}\cup\{z_0\}$ with $f(z)\not\equiv0$ and $n\geq 1$.
If \[|f(z_0)|=\max\{|f(z)|:z\in \mathbb{\overline{D}}_{r_0}\},\]
then there exists an $m\geq n$ such that \begin{enumerate}
                                           \item $\displaystyle\frac{z_0
                                           f'(z_0)}{f(z_0)}=m$, and
                                           \item $\displaystyle\RE \frac{z_0
                                           f''(z_0)}{f'(z_0)}+1\geq
                                           m$.
                                         \end{enumerate}
\end{lemma}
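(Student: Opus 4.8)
The plan is to extract both conclusions from the first- and second-order conditions satisfied by $\log|f|$ at the boundary maximum $z_0$. First I would note that $M:=|f(z_0)|=\max_{\overline{\mathbb{D}}_{r_0}}|f|$ is positive, since $M=0$ would force $f\equiv 0$ against the hypothesis; hence $f(z_0)\neq 0$. Because $f$ is analytic at $z_0$ and nonvanishing there, the real-analytic function $u(\theta):=\log|f(r_0e^{i\theta})|$ is defined and smooth for $\theta$ near the argument $\theta_0$ of $z_0$, and on the circle $|z|=r_0$ it is maximized at $\theta_0$; thus $u'(\theta_0)=0$ and $u''(\theta_0)\le 0$. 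Writing $p(z):=zf'(z)/f(z)$ and using $\tfrac{dz}{d\theta}=iz$ along $z=r_0e^{i\theta}$, I compute $u'(\theta)=\RE\!\big(i\,p(z)\big)=-\operatorname{Im}p(z)$, so $u'(\theta_0)=0$ forces $\operatorname{Im}p(z_0)=0$. Therefore $m:=p(z_0)=z_0f'(z_0)/f(z_0)$ is real, which is the equality in conclusion (1) once we verify $m\ge n$.

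The crux is to upgrade ``$m$ real'' to ``$m\ge n$.'' For this I would introduce the auxiliary function $\phi(z):=f(r_0z)/(M z^{n})$. Since $f(r_0z)$ is analytic on $\mathbb{D}$ with a zero of order at least $n$ at the origin, $\phi$ extends analytically across $0$, is analytic on $\mathbb{D}$, continuous on $\overline{\mathbb{D}}$, and differentiable at the boundary point $\zeta_0:=z_0/r_0$. On $|z|=1$ we have $|\phi(z)|=|f(r_0z)|/M\le 1$, so the maximum modulus principle gives $|\phi|\le 1$ on $\overline{\mathbb{D}}$, while $|\phi(\zeta_0)|=|f(z_0)|/M=1$. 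Consequently $r\mapsto\log|\phi(r\zeta_0)|$ is nonpositive on $(0,1]$ and vanishes at $r=1$, so its left derivative at $r=1$ is nonnegative; that derivative equals $\RE\big(\zeta_0\phi'(\zeta_0)/\phi(\zeta_0)\big)$. A logarithmic differentiation of $\phi$ gives $\zeta_0\phi'(\zeta_0)/\phi(\zeta_0)=p(z_0)-n=m-n$, whence $m-n\ge 0$. This finishes conclusion (1); in particular $m\ge n\ge 1>0$ guarantees $f'(z_0)\neq 0$, so that $z_0f''(z_0)/f'(z_0)$ is meaningful.

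For conclusion (2) I would return to the second angular derivative. Differentiating once more, $u''(\theta)=-\RE\big(z p'(z)\big)$, and from $p=zf'/f$ one computes $z p'(z)=p+\frac{z^2 f''}{f}-p^2$. Since $u''(\theta_0)\le 0$ and $p(z_0)=m$ is real, evaluating at $z_0$ yields $m-m^2+\RE\frac{z_0^2 f''(z_0)}{f(z_0)}\ge 0$. Writing $\frac{z_0^2 f''(z_0)}{f(z_0)}=\frac{z_0 f''(z_0)}{f'(z_0)}\cdot m$ and using that $m$ is real, the inequality becomes $m-m^2+m\,\RE\frac{z_0f''(z_0)}{f'(z_0)}\ge 0$; dividing by $m>0$ gives $\RE\frac{z_0 f''(z_0)}{f'(z_0)}+1\ge m$, which is precisely conclusion (2).

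On difficulty: the two angular-derivative computations are routine once the parametrization $z=r_0e^{i\theta}$ and the substitution $p=zf'/f$ are in place, and the sign conclusions come straight from the elementary first- and second-derivative tests at a maximum. The genuinely delicate step is establishing $m\ge n$, since Jack's bare inequality ``$m\ge 1$'' must be sharpened to reflect the full order $n$ of the zero at the origin. This is exactly why the normalization by $z^{n}$ in $\phi$ is essential, and why a \emph{boundary} radial-derivative argument is used: the extremal modulus $1$ of $\phi$ is attained on $|z|=1$ rather than inside the disk.
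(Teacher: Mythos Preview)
The paper does not give its own proof of this lemma; it is simply quoted from Miller and Mocanu \cite[Lemma~2.2a]{miller} and used as a tool in the proofs of Theorems~\ref{th2.1}--\ref{th2.4}. So there is no in-paper argument to compare your proposal against.

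That said, your proof is correct and self-contained. The angular first- and second-derivative computations for $u(\theta)=\log|f(r_0e^{i\theta})|$ are the standard ones underlying Jack's lemma, and your handling of the key step---upgrading ``$m$ real'' to ``$m\ge n$''---is clean: normalizing by $z^n$ to form $\phi(z)=f(r_0z)/(Mz^n)$ and reading off the sign of the radial derivative of $\log|\phi|$ at the boundary extremum is exactly the right idea, and it isolates precisely where the order-$n$ vanishing at the origin enters. The only mild technical point worth flagging is that you implicitly use analyticity of $f$ in a full neighbourhood of $z_0$ (not merely at the single point) so that $u$ is twice differentiable in $\theta$ near $\theta_0$ and so that $\phi$ is differentiable at $\zeta_0$; this is indeed what ``analytic on $\mathbb{D}_{r_0}\cup\{z_0\}$'' provides, but it is worth saying explicitly. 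Otherwise the argument is complete and matches the classical proof in Miller--Mocanu.
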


\begin{proof}[Proof of Theorem~\ref{th2.1}]
Let the function $w$ be defined by
\begin{equation} \label{eq1}\frac{f'(z)}{pz^{p-1}}=\frac{1+\alpha w(z)}{1+w(z)}.\end{equation} Then
clearly $w$ is analytic in $\mathbb{D}$ with $w(0)= 0$. From
\eqref{eq1}, some computation yields
\begin{equation} \label{eq2}1+\frac{zf''(z)}{f'(z)}=p+\frac{\alpha zw'(z)}{1+\alpha w(z)}-\frac{ zw'(z)}{1+ w(z)}.
\end{equation}
Suppose there exists a point $z_0 \in\mathbb{D}$ such that
\[|w(z_0)|=1\ \text{and}\ |w(z)|<1\ \text{when}\ |z|<|z_0|.\]
Then by applying Lemma \ref{lem1.1}, there exists $m\geq n$ such that
\begin{equation} \label{eq3} z_0 w'(z_0)=mw(z_0), \quad (w(z_0)=e^{i\theta}; \theta\in \mathbb{R}).\end{equation} Thus, by
using \eqref{eq2} and \eqref{eq3}, it follows that
\begin{align*} \RE\left(1+\frac{z_0 f''(z_0)}{f'(z_0)}
\right)&=p+ \RE \left(\frac{\alpha m w(z_0)}{1+\alpha
w(z_0)}\right)- \RE\left(\frac{mw(z_0)}{1+w(z_0)}\right)
\\&=p+ \RE\left(\frac{\alpha m e^{i\theta}}{1+\alpha
e^{i\theta}}\right)- \RE\left(\frac{ m
e^{i\theta}}{1+e^{i\theta}}\right)\\&=p+ \frac{\alpha
m(\alpha+\cos
\theta)}{1+\alpha^2+2\alpha\cos\theta}-\frac{m}{2}\\&\leq
\frac{(2p-n)+\alpha(2p+n)}{2(\alpha+1)}
\end{align*} which contradicts the hypothesis \eqref{hpy1}. It follows that
$|w(z)|<1,$ that is
\[\left|\frac{1-\frac{f'(z)}{pz^{p-1}}}{\frac{f'(z)}{pz^{p-1}}-\alpha}\right|<1.\]
This evidently completes the proof of Theorem \ref{th2.1}.
\end{proof}

Owa \cite{owabam} shows that a function $f\in\mathcal{A}_{p,1} $ satisfying $\RE (1+zf''(z)/f'(z))<p+1/2$ implies $f$ is $p$-valently starlike. Our next theorem investigates the close-to-convexity of this type of functions. For related results, see \cite{owaaml,liusjm, yama}.

\begin{theorem}\label{th2.2}
If the function $f\in\mathcal{A}_{p,n}$ satisfies the inequality
\begin{equation}\label{hpy2}  \RE
\left(1+\frac{zf''(z)}{f'(z)}\right)< \frac{(p+n)\alpha
+(2p+n)}{ (\alpha+2)},\end{equation} then \[
\left|\frac{f'(z)}{pz^{p-1}}-1\right|<1+\alpha.\]
\end{theorem}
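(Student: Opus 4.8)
The plan is to imitate the proof of Theorem~\ref{th2.1}, replacing the bilinear substitution there by one adapted to the target disk $|\zeta-1|<1+\alpha$. Define $w$ on $\mathbb{D}$ by
\begin{equation*}
\frac{f'(z)}{pz^{p-1}}=1+(1+\alpha)w(z).
\end{equation*}
Because $f\in\mathcal{A}_{p,n}$, the function $f'(z)/(pz^{p-1})$ is analytic at the origin with value $1$, so $w$ is analytic in $\mathbb{D}$ with $w(0)=0$, and the conclusion $\bigl|f'(z)/(pz^{p-1})-1\bigr|<1+\alpha$ is exactly the assertion $|w(z)|<1$ on $\mathbb{D}$. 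Logarithmic differentiation of $f'(z)=pz^{p-1}\bigl(1+(1+\alpha)w(z)\bigr)$ gives the analogue of \eqref{eq2},
\begin{equation*}
1+\frac{zf''(z)}{f'(z)}=p+\frac{(1+\alpha)zw'(z)}{1+(1+\alpha)w(z)}.
\end{equation*}

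Next I would argue by contradiction, assuming $|w|$ is not bounded by $1$ on $\mathbb{D}$. Since $w(0)=0$ and $w$ is continuous, there is then a point $z_0\in\mathbb{D}$ with $|w(z_0)|=1$ and $|w(z)|<1$ for $|z|<|z_0|$, and Lemma~\ref{lem1.1} furnishes $m\geq n$ with $z_0w'(z_0)=mw(z_0)$, say $w(z_0)=e^{i\theta}$. Substituting into the identity above and taking real parts,
\begin{equation*}
\RE\left(1+\frac{z_0f''(z_0)}{f'(z_0)}\right)=p+m\,\RE\frac{(1+\alpha)e^{i\theta}}{1+(1+\alpha)e^{i\theta}}=p+\frac{m(1+\alpha)\bigl((1+\alpha)+\cos\theta\bigr)}{\bigl|1+(1+\alpha)e^{i\theta}\bigr|^{2}}.
\end{equation*}
Here the denominator $\bigl|1+(1+\alpha)e^{i\theta}\bigr|^{2}=\bigl|f'(z_0)/(pz_0^{p-1})\bigr|^{2}$ is positive: $z_0\neq0$, and \eqref{hpy2} precludes zeros of $f'$ in $\mathbb{D}\setminus\{0\}$ since $\RE(1+zf''/f')$ is unbounded above near such a zero.

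The heart of the matter is then an elementary one-variable estimate. Writing $\beta=1+\alpha\geq1$ and $\bigl|1+\beta e^{i\theta}\bigr|^{2}=1+\beta^{2}+2\beta\cos\theta$, the inequality
\begin{equation*}
\frac{\beta+\cos\theta}{1+\beta^{2}+2\beta\cos\theta}\geq\frac{1}{\beta+1}
\end{equation*}
is, on clearing the positive denominators, equivalent to $(\beta-1)(1-\cos\theta)\geq0$, which is clear. Together with $m\geq n$ this yields
\begin{equation*}
\RE\left(1+\frac{z_0f''(z_0)}{f'(z_0)}\right)\geq p+\frac{n(1+\alpha)}{\alpha+2}=\frac{(p+n)\alpha+(2p+n)}{\alpha+2},
\end{equation*}
contradicting \eqref{hpy2}. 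Hence $|w(z)|<1$ throughout $\mathbb{D}$, which is the claim. I expect the only real work to be the bookkeeping around this sharp-constant reduction; as in Theorem~\ref{th2.1}, the class $\mathcal{A}_{p,n}$ enters only through $w(0)=0$ and the lower bound $m\geq n$ in Lemma~\ref{lem1.1}.
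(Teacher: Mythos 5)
Your proposal is correct and follows essentially the same route as the paper's own proof: the identical substitution $f'(z)/(pz^{p-1})=1+(1+\alpha)w(z)$, the same application of Lemma~\ref{lem1.1} at a boundary point of maximal modulus, and the same contradiction with \eqref{hpy2}. The only difference is that you spell out the elementary estimate $\frac{(1+\alpha)+\cos\theta}{1+(1+\alpha)^2+2(1+\alpha)\cos\theta}\geq\frac{1}{\alpha+2}$ (equivalent to $\alpha(1-\cos\theta)\geq0$) and the nonvanishing of the denominator, both of which the paper leaves implicit.
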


\begin{proof}
Consider the function $w$ defined by \begin{equation} \label{eq2a}\frac{f'(z)}{pz^{p-1}}=(1+\alpha)w(z)+1.\end{equation}
Then
clearly $w$ is analytic in $\mathbb{D}$ with $w(0)= 0$. From
\eqref{eq2a}, some computation yields
\begin{equation} \label{eq3a}1+\frac{zf''(z)}{f'(z)}=p+\frac{(1+\alpha)zw'(z)}{(1+\alpha) w(z)+1}.
\end{equation}
Suppose there exists a point $z_0 \in\mathbb{D}$ such that
\[|w(z_0)|=1\ \text{and}\ |w(z)|<1\ \text{when}\ |z|<|z_0|.\]
Then by applying Lemma \ref{lem1.1}, there exists $m\geq n$ such that
\begin{equation} \label{eq4a} z_0 w'(z_0)=mw(z_0), \quad (w(z_0)=e^{i\theta}; \theta\in \mathbb{R}).\end{equation} Thus, by
using \eqref{eq3a} and \eqref{eq4a}, it follows that
\begin{align*}\RE\left(1+\frac{z_0 f''(z_0)}{f'(z_0)}
\right)&=p+\RE \left(\frac{(1+\alpha)z_{0}w'(z_{0})}{(1+\alpha) w(z_{0})+1}\right)\\
&=p+ \RE\left(\frac{(1+\alpha)me^{i\theta}}{(1+\alpha) e^{i\theta}+1}\right)\\
&=p+ \frac{m(1+\alpha)(1+\alpha+\cos
\theta)}{1+(1+\alpha)^2+2(1+\alpha)\cos\theta} \\
&\geq \frac{(p+n)\alpha+(2p+n)}{(\alpha+2)},
\end{align*} which contradicts the hypothesis \eqref{hpy2}. It follows that
$|w(z)|<1,$ that is,
\[\left|\frac{f'(z)}{pz^{p-1}}-1\right|<1+\alpha.\]
This evidently completes the proof of Theorem \ref{th2.2}.
\end{proof}

   Owa \cite{owasin} has also showed that a function $f\in\mathcal{A}$ satisfying $|f'(z)/g'(z)-1|^\beta|zf''(z)/g'(z)-zf'(z)g''(z)/(g'(z))^2|^\gamma<(1+\alpha)^{\beta+\alpha}$, for $0\leq\alpha<1$, $\beta\geq0$, $\gamma\geq0$ and $g$  a convex function, is close-to-convex. Also, see \cite{linnmj}. Our next theorem investigates the close-to-convexity of similar class of functions.

\begin{theorem} \label{th2.3}
If $f\in\mathcal{A}_{p,n}$, then
\begin{equation}\label{hyp3}\left|\frac{f'(z)}{pz^{p-1}}-1\right|^\beta \left|\frac{f''(z)}{z^{p-2}}-(p-1)\frac{f'(z)}{z^{p-1}}\right|^\gamma
<\frac{(pn)^\gamma(1-\alpha)^{\beta+\gamma}}{2^{\beta+2\gamma}}
 \end{equation} implies \[\RE
\left(\frac{f'(z)}{pz^{p-1}}\right)>\frac{1+\alpha}{2},\] and \begin{equation}\label{hypne}\left|\frac{f'(z)}{pz^{p-1}}-1\right|^\beta \left|\frac{f''(z)}{z^{p-2}}-(p-1)\frac{f'(z)}{z^{p-1}}\right|^\gamma
<(pn)^\gamma|1-\alpha|^{\beta+\gamma}
 \end{equation} implies \[
\left|\frac{f'(z)}{pz^{p-1}}-1\right|<1-\alpha.\]
\end{theorem}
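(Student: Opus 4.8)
The plan is to handle the two implications by the same device used in the proofs of Theorems~\ref{th2.1} and \ref{th2.2}: introduce a Schwarz-type function $w$ with $w(0)=0$, analytic in $\mathbb{D}$ as before, show $|w(z)|<1$ on $\mathbb{D}$ by a Jack/Miller--Mocanu argument, and read off the conclusion from the range of the Möbius map used to define $w$. The one structural observation that makes this work is that
\[
\frac{f''(z)}{z^{p-2}}-(p-1)\frac{f'(z)}{z^{p-1}}=z\left(\frac{f'(z)}{z^{p-1}}\right)',
\]
so the second factor in \eqref{hyp3} and \eqref{hypne} is just $z$ times the derivative of $f'(z)/z^{p-1}$; differentiating the defining relation for $w$ then converts that factor directly into an expression in $w$ and $w'$.

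For the first implication I would take $w$ exactly as in \eqref{eq1}, i.e. $f'(z)/(pz^{p-1})=(1+\alpha w(z))/(1+w(z))$. Then $f'(z)/(pz^{p-1})-1=(\alpha-1)w(z)/(1+w(z))$, and differentiating gives $z(f'(z)/z^{p-1})'=p(\alpha-1)zw'(z)/(1+w(z))^{2}$, so the left-hand side of \eqref{hyp3} equals
\[
(1-\alpha)^{\beta+\gamma}\,p^{\gamma}\,\frac{|w(z)|^{\beta}\,|zw'(z)|^{\gamma}}{|1+w(z)|^{\beta+2\gamma}}.
\]
If $|w|$ were not less than $1$ throughout $\mathbb{D}$, pick $z_{0}$ with $|w(z_{0})|=1$ and $|w(z)|<1$ for $|z|<|z_{0}|$; Lemma~\ref{lem1.1} yields $m\ge n$ with $z_{0}w'(z_{0})=mw(z_{0})$ and $w(z_{0})=e^{i\theta}$. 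Substituting, $|w(z_{0})|^{\beta}=1$, $|z_{0}w'(z_{0})|^{\gamma}=m^{\gamma}\ge n^{\gamma}$ (since $\gamma\ge0$), and $|1+e^{i\theta}|\le 2$, so the displayed quantity at $z_{0}$ is at least $(pn)^{\gamma}(1-\alpha)^{\beta+\gamma}/2^{\beta+2\gamma}$, contradicting \eqref{hyp3}. Hence $|w(z)|<1$ on $\mathbb{D}$, and since $w\mapsto(1+\alpha w)/(1+w)$ maps $\mathbb{D}$ onto the half-plane $\{\RE\zeta>(1+\alpha)/2\}$, this gives $\RE(f'(z)/(pz^{p-1}))>(1+\alpha)/2$.

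For the second implication I would instead set $f'(z)/(pz^{p-1})=1+(1-\alpha)w(z)$, so that $|w(z)|<1$ is precisely $|f'(z)/(pz^{p-1})-1|<|1-\alpha|$. Now $f'(z)/(pz^{p-1})-1=(1-\alpha)w(z)$ and $z(f'(z)/z^{p-1})'=p(1-\alpha)zw'(z)$, hence the left-hand side of \eqref{hypne} equals $|1-\alpha|^{\beta+\gamma}p^{\gamma}|w(z)|^{\beta}|zw'(z)|^{\gamma}$. Running the same Jack's-lemma step (now with no $|1+w|$ factor to estimate) forces this, at the putative first boundary point $z_{0}$, to be at least $(pn)^{\gamma}|1-\alpha|^{\beta+\gamma}$, contradicting \eqref{hypne}; therefore $|w(z)|<1$ on $\mathbb{D}$, which is the assertion.

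I do not expect a serious obstacle: the argument is a direct adaptation of the scheme already used twice in the paper. The only points needing a little care are the identity $\frac{f''(z)}{z^{p-2}}-(p-1)\frac{f'(z)}{z^{p-1}}=z(f'(z)/z^{p-1})'$, which is what lets the hypotheses be rewritten in terms of $w$ and $w'$; the bookkeeping of exponents when substituting $z_{0}w'(z_{0})=mw(z_{0})$; and the elementary bounds $|1+e^{i\theta}|\le 2$ and $m^{\gamma}\ge n^{\gamma}$, which produce exactly the constants $(pn)^{\gamma}(1-\alpha)^{\beta+\gamma}/2^{\beta+2\gamma}$ and $(pn)^{\gamma}|1-\alpha|^{\beta+\gamma}$ on the right-hand sides of \eqref{hyp3} and \eqref{hypne}.
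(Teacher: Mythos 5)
Your proposal is correct and follows essentially the same route as the paper's own proof: the same two choices of $w$, the same rewriting of the hypotheses as $p^{\gamma}|w|^{\beta}|1-\alpha|^{\beta+\gamma}|zw'|^{\gamma}/|1+w|^{\beta+2\gamma}$ (resp.\ without the $|1+w|$ factor), and the same application of Lemma~\ref{lem1.1} at a first boundary point with the bounds $m\ge n$ and $|1+e^{i\theta}|\le 2$. No gaps.
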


\begin{proof}
For the function $w$ defined by
\begin{equation}\label{eqw3}
\frac{f'(z)}{pz^{p-1}}=\frac{1+\alpha w(z)}{1+w(z)},
\end{equation}
we can rewrite \eqref{eqw3} to yield \[\frac{f'(z)}{pz^{p-1}}-1=\frac{(\alpha-1)w(z)}{1+w(z)}\], which leads to
\begin{equation}\label{eqb}\left|\frac{f'(z)}{pz^{p-1}}-1\right|^\beta=\frac{|w(z)|^\beta|1-\alpha|^\beta}{|1+w(z)|^\beta}.\end{equation} By some computation, it is evident that \[\frac{f''(z)}{z^{p-2}}-(p-1)\frac{f'(z)}{z^{p-1}}=\frac{p(\alpha-1)zw'(z)}{(1+w(z))^2}\] or
\begin{equation}\label{eqg}\left|\frac{f''(z)}{z^{p-2}}-(p-1)\frac{f'(z)}{z^{p-1}}\right|^\gamma=\frac{p^\gamma|zw'(z)|^\gamma|1-\alpha|^\gamma}{|1+w(z)|^{2\gamma}}.\end{equation}
From \eqref{eqb} and \eqref{eqg}, it follows that
\[\left|\frac{f'(z)}{pz^{p-1}}-1\right|^\beta \left|\frac{f''(z)}{z^{p-2}}-(p-1)\frac{f'(z)}{z^{p-1}}\right|^\gamma=\frac{p^\gamma|w(z)|^\beta(1-\alpha)^{\beta+\gamma}|zw'(z)|^\gamma}{|1+w(z)|^{\beta+2\gamma}}.\]
Suppose there exists a point $z_0 \in\mathbb{D}$ such that
\[|w(z_0)|=1\ \text{and}\ |w(z)|<1\ \text{when}\ |z|<|z_0|.\]
Then \eqref{eq3} and Lemma \ref{lem1.1} yield
\begin{align*}
\left|\frac{f'(z_0)}{pz_{0}^{p-1}}-1\right|^\beta \left|\frac{f''(z_{0})}{z_{0}^{p-2}}-(p-1)\frac{f'(z_{0})}{z_{0}^{p-1}}\right|^\gamma
&=\frac{p^\gamma(1-\alpha)^{\beta+\gamma}|w(z_0)|^\beta
|mw(z_0)|^\gamma}{|1+e^{i\theta}|^{\beta+2\gamma}}\\&=\frac{p^\gamma
m^\gamma
(1-\alpha)^{\beta+\gamma}}{(2+2\cos\theta)^{(\beta+2\gamma)/2}}\\&\geq
\frac{p^\gamma n^\gamma
(1-\alpha)^{\beta+\gamma}}{2^{\beta+2\gamma}},
\end{align*} which contradicts the hypothesis \eqref{hyp3}. Hence
$|w(z)|<1$, which implies
\[\left|\frac{1-\frac{f'(z)}{pz^{p-1}}}{\frac{f'(z)}{pz^{p-1}}-\alpha}\right|<1,\] or equivalently \[\RE
\left(\frac{f'(z)}{pz^{p-1}}\right)>\frac{1+\alpha}{2}.\]
For the second implication in the proof, consider the function $w$ defined by
\begin{equation}\label{eqwnew}
\frac{f'(z)}{pz^{p-1}}=1+(1-\alpha) w(z).
\end{equation}
Then \begin{equation}\label{eqb2}\left|\frac{f'(z)}{pz^{p-1}}-1\right|^\beta=|1-\alpha|^{\beta}|w(z)|^\beta\end{equation} and
\begin{equation}\label{eqg2}\left|\frac{f''(z)}{z^{p-2}}-(p-1)\frac{f'(z)}{z^{p-1}}\right|^\gamma=p^\gamma|zw'(z)|^\gamma|1-\alpha|^\gamma.
\end{equation}
From \eqref{eqb2} and \eqref{eqg2}, it is clear that \[\left|\frac{f'(z)}{pz^{p-1}}-1\right|^\beta \left|\frac{f''(z)}{z^{p-2}}-(p-1)\frac{f'(z)}{z^{p-1}}\right|^\gamma=p^\gamma |w(z)|^\beta|1-\alpha|^{\beta+\gamma}|zw'(z)|^\gamma.\]
Suppose there exists a point $z_0 \in\mathbb{D}$ such that
\[|w(z_0)|=1\ \text{and}\ |w(z)|<1\ \text{when}\ |z|<|z_0|.\]
Then by applying Lemma \ref{lem1.1} and using \eqref{eq3}, it follows that
\begin{align*}
\left|\frac{f'(z_0)}{pz_{0}^{p-1}}-1\right|^\beta \left|\frac{f''(z_{0})}{z_{0}^{p-2}}-(p-1)\frac{f'(z_{0})}{z_{0}^{p-1}}\right|^\gamma
&=p^\gamma |w(z_{0})|^\beta|1-\alpha|^{\beta+\gamma}|z_{0}w'(z_{0})|^\gamma\\&=p^\gamma
m^\gamma
|1-\alpha|^{\beta+\gamma}\\&\geq
(p n)^\gamma
|1-\alpha|^{\beta+\gamma},
\end{align*} which contradicts the hypothesis \eqref{hypne}. Hence
$|w(z)|<1$ and this implies
\[\left|\frac{f'(z)}{pz^{p-1}}-1\right|<1-\alpha\].
Thus the proof is complete.
\end{proof}

In next theorem, we need the concept of subordination. Let $f$ and $g$ be analytic functions defined on $\mathbb{D}$. Then $f$ is \emph{subordinate} to $g$, written $f\prec g$, provided there is an analytic function $w:\mathbb{D}\rightarrow \mathbb{D}$ with $w(0)= 0$
such that $f=g\circ w$.

\begin{theorem}\label{th2.4}
 Let $\lambda_1$ and $\lambda_2$ be given by
 \begin{align*}
\lambda_1&=\frac{2n+4(2p-1)}{4+n-2p+\sqrt{16n+n^2 +32p-12np-28p^2}},  \\
\lambda_2&= \frac{2n+4(2p-1)}{-n+2p+\sqrt{16-8n+n^2 -48p+4np+36p^2}},\end{align*}
and   $\lambda_1<\lambda<\lambda_2$.
If the function $f\in\mathcal{A}_{p,n}$ satisfies the inequality
\begin{equation}\label{hyp4} \RE
\left(1+\frac{zf''(z)}{f'(z)}\right)<\left\{
                                       \begin{array}{ll}
                                         \frac{2(1-p)\lambda^2 +(4+n)\lambda+(2-2p-n)}{2(\lambda+1)}, & \hbox{$\lambda_1<\lambda \leq\frac{p+n}{p}$;}
\\[10pt]
                                         \frac{2(1-p)\lambda^2 +n\lambda+(-2+2p+n)}{2(\lambda-1)}, & \hbox{$\frac{p+n}{p}<\lambda <\lambda_2$,}
                                       \end{array}
                                     \right.
 \end{equation}then
\begin{equation}\label{hyp4-s}\frac{1}{p}\frac{zf'(z)}{f(z)}\prec
 \frac{\lambda(1-z)}{\lambda-z}.\end{equation}
\end{theorem}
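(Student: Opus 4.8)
The plan is to use the extremal-point argument of Lemma~\ref{lem1.1}, exactly as in the proofs of Theorems~\ref{th2.1}--\ref{th2.3}, but applied to the quotient $zf'(z)/f(z)$ instead of to $f'(z)/(pz^{p-1})$. Define $w$ by
\[
\frac{1}{p}\,\frac{zf'(z)}{f(z)}=\frac{\lambda\bigl(1-w(z)\bigr)}{\lambda-w(z)}.
\]
Since $f(z)/z^{p}$ is analytic and non-vanishing near the origin and $\lambda>1$, this determines $w$ analytic near $0$ with $w(0)=0$, and the subordination \eqref{hyp4-s} is precisely the statement that $|w(z)|<1$ throughout $\mathbb{D}$. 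The engine is the identity $1+zf''(z)/f'(z)=G(z)+zG'(z)/G(z)$ with $G(z)=zf'(z)/f(z)$: substituting $G=p\lambda(1-w)/(\lambda-w)$ turns it into
\[
1+\frac{zf''(z)}{f'(z)}=\frac{p\lambda\bigl(1-w(z)\bigr)}{\lambda-w(z)}-\frac{(\lambda-1)\,zw'(z)}{\bigl(1-w(z)\bigr)\bigl(\lambda-w(z)\bigr)}.
\]

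Suppose, for contradiction, that $|w|$ is not bounded by $1$ on $\mathbb{D}$. Then there is a point $z_{0}\in\mathbb{D}$ with $|w(z_{0})|=1$ and $|w(z)|<1$ for $|z|<|z_{0}|$, so Lemma~\ref{lem1.1} gives an $m\ge n$ with $z_{0}w'(z_{0})=mw(z_{0})$ and $w(z_{0})=e^{i\theta}$, $\theta\in\mathbb{R}$; moreover $\theta\neq0$, since $\theta=0$ would force $f'(z_{0})=0$ and hence make $1+zf''(z)/f'(z)$ unbounded near $z_{0}$, against \eqref{hyp4}. Inserting this into the identity and taking real parts, a direct computation with $u:=1-\cos\theta\in(0,2]$ (using $\lambda^{2}-2\lambda\cos\theta+1=(\lambda-1)^{2}+2\lambda u$) collapses everything to
\[
\RE\left(1+\frac{z_{0}f''(z_{0})}{f'(z_{0})}\right)=\frac{2p\lambda(\lambda+1)u+(\lambda^{2}-1)m}{2\bigl[(\lambda-1)^{2}+2\lambda u\bigr]}.
\]

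Finally I would minimise the right-hand side over the admissible $(u,m)$. It increases with $m$ (as $\lambda>1$), so it is at least its value at $m=n$; and that value, as a linear-fractional function of $u$, is monotone with derivative of the sign of $p(\lambda-1)-n$, i.e.\ of $\lambda-\frac{p+n}{p}$. For $\lambda\le\frac{p+n}{p}$ it decreases and is minimised at $u=2$, with value $\bigl(4p\lambda+(\lambda-1)n\bigr)/\bigl(2(\lambda+1)\bigr)$; for $\lambda>\frac{p+n}{p}$ it increases and strictly exceeds its value $(\lambda+1)n/\bigl(2(\lambda-1)\bigr)$ at $u=0$. In either case this lower bound minus the corresponding branch of the right side of \eqref{hyp4} simplifies to $(p-1)(\lambda+1)\ge0$, so $\RE\bigl(1+z_{0}f''(z_{0})/f'(z_{0})\bigr)$ is at least---and on the second branch strictly greater than---the bound in \eqref{hyp4}, a contradiction; thus $|w|<1$ on $\mathbb{D}$ and \eqref{hyp4-s} holds. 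The numbers $\lambda_{1},\lambda_{2}$ serve only here: they are the roots of the two quadratics obtained by equating the branches of \eqref{hyp4} to $p$, so that for $\lambda_{1}<\lambda<\lambda_{2}$ the hypothesis is non-vacuous and $\lambda>1$. I expect the one genuinely laborious step to be the real-part reduction to the displayed closed form, followed by the two short verifications that each ``lower bound minus branch'' equals $(p-1)(\lambda+1)$; the fact that $w$ is a priori only meromorphic on $\mathbb{D}$ is handled, as usual, by the very existence of the extremal point $z_{0}$.
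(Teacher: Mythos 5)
Your proof is correct and follows the same route as the paper: the same definition of $w$, the same logarithmic‑differentiation identity, Lemma~\ref{lem1.1} at a putative extremal point, and a contradiction with \eqref{hyp4}. The one substantive difference is in the algebra, and your version is the one that checks out: collecting the real part into a single expression gives
\[
\RE\left(1+\frac{z_0f''(z_0)}{f'(z_0)}\right)=\frac{p(\lambda+1)}{2}+\frac{(\lambda^2-1)\left[(p+m)-p\lambda\right]}{2(\lambda^2+1-2\lambda\cos\theta)},
\]
which is exactly your displayed closed form in the variable $u=1-\cos\theta$, whereas the paper's simplification has $\frac{(\lambda+1)(2-p)}{2}$ where $\frac{p(\lambda+1)}{2}$ should stand; the two agree only for $p=1$. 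The discrepancy is $(p-1)(\lambda+1)\ge 0$, so the paper's bound \eqref{3.6} (whose right side coincides with that of \eqref{hyp4}) remains a valid, though no longer sharp, lower bound and the contradiction still goes through --- which is precisely the observation you make when you verify that your minimum exceeds each branch of \eqref{hyp4} by $(p-1)(\lambda+1)$. Your write-up is in fact more careful than the paper's on two further points: you carry out explicitly the monotonicity analysis in $m$ and in $u$ that the paper compresses into ``which yields the inequality \eqref{3.6}'', and you exclude $\theta=0$ (where $w(z_0)=1$ forces $f'(z_0)=0$ and the curvature expression blows up), a degeneracy the paper does not address. In short: same method, correct conclusion, and your computation shows the stated bounds are not sharp for $p>1$.
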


\begin{proof}
Let us define $w$ by \begin{equation}\label{w4}
\frac{1}{p}\frac{zf'(z)}{f(z)}=\frac{\lambda
(1-w(z))}{\lambda-w(z)}. \end{equation} By doing the logarithmic
differentiation on \eqref{w4}, we get
\[1+\frac{zf''(z)}{f'(z)}=\frac{p \lambda(1-w(z))}{\lambda-z}-\frac{zw'(z)}{1-w(z)}+\frac{zw'(z)}{\lambda-w(z)}.\]
Assume that there exists a point $z_0 \in \mathbb{D}$ such
that $|w(z_0)|=1$ and $|w(z)|<1$ when $|z|<|z_0|$. By applying Lemma \ref{lem1.1}
as in Theorem \ref{th2.1}, it follows that
\begin{align*}  \RE\left(1+\frac{z_0 f''(z_0)}{f'(z_0)}
\right)&= \RE\left(\frac{p\lambda(1-e^{i\theta})}{\lambda -e^{i\theta}}\right)- \RE\left(\frac{m e^{i\theta}}{1-e^{i\theta}}\right)
+ \RE\left(\frac{ me^{i\theta}}{\lambda-e^{i\theta}}\right)\\
&= \frac{p\lambda(\lambda+1)(1-\cos\theta)}{\lambda^2+1-2\lambda\cos\theta}+\frac{m}{2}+\frac{m(\lambda\cos
\theta-1)}{\lambda^2+1-2\lambda\cos\theta}\\
&= \frac{\lambda+1}{2}(2-p)+\frac{(\lambda^2
-1)[(p+m)-p\lambda]}{2(\lambda^2+1-2\lambda\cos\theta)} \\
&\geq
 \frac{\lambda+1}{2}(2-p)+\frac{(\lambda^2
-1)[(p+n)-p\lambda]}{2(\lambda^2+1-2\lambda\cos\theta)},
\end{align*} which yields the inequality
\begin{equation}\label{3.6}
 \RE \left(1+\frac{z_0
f''(z_0)}{f'(z_0)}\right)\geq\left\{
                                       \begin{array}{ll}
                                         \frac{2(1-p)\lambda^2 +(4+n)\lambda+(2-2p-n)}{2(\lambda+1)}, & \hbox{$\lambda_1<\lambda \leq\frac{p+n}{p}$;}
\\[8pt]
                                          \frac{2(1-p)\lambda^2 +n\lambda+(-2+2p+n)}{2(\lambda-1)}, & \hbox{$\frac{p+n}{p}<\lambda <\lambda_2$.}
                                       \end{array}
                                     \right.
\end{equation} Since \eqref{3.6} obviously contradicts hypothesis
\eqref{hyp4}, it follows that $|w(z)|<1$. This proves the subordination \eqref{hyp4-s}.
\end{proof}

\begin{remark} The subordination \eqref{hyp4-s} 
 can be written in equivalent form as
\[\left|\frac{\lambda(zf'(z)/f(z)-1)}{zf'(z)/f(z)-\lambda}\right|<1,\]
or by further computation, as \[\left|\frac{1}{p}\frac{zf'(z)}{f(z)}-\frac{\lambda}{\lambda+1}\right|<\frac{\lambda}{\lambda+1}.\]
The last inequality shows that $f$ is starlike in $\mathbb{D}$.
\end{remark}

\begin{remark}
When $p=1$ and $n=1$, Theorems \ref{th2.1}--\ref{th2.4} reduce to Theorems~\ref{th1.1} and \ref{th1.2}.
\end{remark}

\end{document}